\definecolor{refkey}{gray}{.75}
\definecolor{labelkey}{gray}{.5}
\tikzstyle{p}+=[fill=black, circle, minimum width = 1pt, inner sep =
\newtheorem{theorem}{Theorem}
\newtheorem{lemma}[theorem]{Lemma}
\theoremstyle{definition}
\theoremstyle{remark}
\newtheorem*{remarks}{Remarks}
\newcommand{\abs}[1]{\left\lvert#1\right\rvert}
\newcommand{\floor}[1]{\left\lfloor #1 \right\rfloor}
\newcommand{\paren}[1]{\left( #1 \right)}
\newcommand{\x}{\times}
\newcommand{\ZZ}{\mathbb{Z}}
\newcommand{\cP}{\mathcal{P}}
\newcommand{\tN}{\widetilde{N}}
\newcommand{\tA}{\widetilde{A}}
\newcommand{\tL}{\widetilde{\Lambda}}
\title[A short
  proof of the multidimensional Szemer\'edi theorem in the primes]{A short
  proof of the \\ multidimensional Szemer\'edi theorem in the primes}
\author{Jacob Fox}
\address{Department of Mathematics\\
MIT\\
Cambridge\\
MA 02139-4307}
\email{fox@math.mit.edu}
\author{Yufei Zhao}
\address{Department of Mathematics\\
MIT\\
Cambridge\\
MA 02139-4307}
\email{yufeiz@math.mit.edu}
\thanks{The first author was supported by a Packard Fellowship, NSF CAREER award  DMS-1352121, by an Alfred P. Sloan
Fellowship, and by an MIT NEC Corporation Fund Award, and the second author was
  supported by an internship at Microsoft Research New England and a Microsoft Research PhD Fellowship.}
\begin{document}

\begin{abstract}
Tao conjectured that every dense subset of $\mathcal{P}^d$, the $d$-tuples of primes, contains
constellations of any given shape.  This was very recently proved by Cook,
Magyar, and Titichetrakun and independently by Tao and Ziegler. Here we give
a simple proof using the Green-Tao theorem on linear
equations in primes and the Furstenberg-Katznelson multidimensional
Szemer\'edi theorem.
\end{abstract}

\maketitle

Let $\cP_N$ denote the set of primes at most $N$, and let $[N] := \{1, 2, \dots, N\}$. Tao \cite{Tao06jam} conjectured the following result as a natural extension of the Green-Tao theorem \cite{GT08} on arithmetic progressions in the primes and the Furstenberg-Katznelson \cite{FK78} multidimensional generalization of Szemer\'edi's theorem. Special cases of this conjecture were proven in \cite{CM12} and \cite{MT13ar}. The conjecture was very recently resolved by Cook, Magyar, and Titichetrakun~\cite{CMT13ar} and independently by Tao and Ziegler~\cite{TZ13ar}.

\begin{theorem} \label{thm:rel-sz-primes}
Let $d$ be a positive integer, $v_1, \dots, v_k \in \ZZ^d$, and $\delta > 0$. Then, if $N$ is sufficiently large, every subset $A$ of $\cP_N^d$ of cardinality $\abs{A} \geq \delta \abs{\cP_N}^d$ contains a set of the form $a + tv_1, \dots, a + tv_k$, where $a \in \ZZ^d$ and $t$ is a positive integer.
\end{theorem}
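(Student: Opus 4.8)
The plan is to descend from the sparse set $A\subseteq\cP_N^d$ to an honestly dense subset of a single homothetic copy of a large cube, where the Furstenberg--Katznelson multidimensional Szemer\'edi theorem applies as a black box, performing the descent by averaging over those cube copies that lie entirely inside the primes; the Green--Tao theorem on linear equations in primes is exactly what controls how such copies distribute. To begin, I would fix an integer $M=M(\delta,d,v_1,\dots,v_k)$ so large that $Q:=\{0,1,\dots,M-1\}^d$ contains a translate of $\{v_1,\dots,v_k\}$ and that, by Furstenberg--Katznelson, every subset of $Q$ of density at least $\delta/2$ contains a homothetic copy $\{x+sv_1,\dots,x+sv_k\}$ with $x\in\ZZ^d$ and $s$ a positive integer. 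Next, apply the standard $W$-trick with $W=\prod_{\ell\le w}\ell$ and $w=w(N)\to\infty$, slowly enough that the Green--Tao error terms below stay $o$ of the main terms: passing to a fixed invertible residue class in each coordinate replaces $A$ by a set $A'\subseteq S_1\times\cdots\times S_d\subseteq[N']^d$, where $N'\asymp N/W$ and each $S_i$ is, in rescaled coordinates, the set of primes up to $N$ in the chosen class modulo $W$; a pigeonhole over the $\phi(W)^d$ residue tuples, using that each contributes the expected count of primes, lets us also assume $\abs{A'}\ge(\delta-o(1))\abs{S_1\times\cdots\times S_d}$, and any homothetic copy of $\{v_1,\dots,v_k\}$ inside $A'$ pulls back to one inside $A$ (with common difference multiplied by $W$).

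The heart of the argument is a double count. Let $\mathcal H$ be the set of ``full'' copies, i.e.\ pairs $(a,t)$ with $a\in\ZZ^d$, $1\le t\le N'/M^2$, and $a+tu\in S_1\times\cdots\times S_d$ for every $u\in Q$; write $a+tQ:=\{a+tu:u\in Q\}$. Then
\[
\sum_{(a,t)\in\mathcal H}\abs{A'\cap(a+tQ)}=\sum_{u\in Q}\#\bigl\{(a,t)\in\mathcal H\colon a+tu\in A'\bigr\}.
\]
For fixed $u$, substituting $p=a+tu$ expresses the $u$-term as the number of pairs $(p,t)$ with $p\in A'$ and $p+t(u'-u)\in S_1\times\cdots\times S_d$ for all $u'\in Q$; coordinatewise this demands an $M$-term progression in $t$ to consist of primes in a fixed class modulo $W$, so for each fixed $p$ the Green--Tao theorem on linear equations in primes evaluates the number of admissible $t$ as $(1+o(1))\mathfrak S_p\Phi$, with $\Phi$ a normalisation independent of $p$ and $u$ and $\mathfrak S_p$ a singular series, normalised to have generic value $1$. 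The $W$-trick removes the dependence of the small-prime local factors on $p$, and the remaining fluctuations of $\mathfrak S_p$ are supported, at each prime $\ell>w$, on an $O(1/\ell)$-proportion of the base points, hence negligible in $L^1$; thus $\sum_{p\in S_1\times\cdots\times S_d}\mathfrak S_p=(1+o(1))\abs{S_1\times\cdots\times S_d}$ and, $A'$ having positive relative density, $\sum_{p\in A'}\mathfrak S_p=(1+o(1))\abs{A'}$. Summing over the $M^d$ values of $u$, and running the same count with $A'$ replaced by $S_1\times\cdots\times S_d$ to evaluate $\abs{\mathcal H}$,
\[
\sum_{(a,t)\in\mathcal H}\abs{A'\cap(a+tQ)}=(1+o(1))M^d\abs{A'}\Phi,\qquad \abs{\mathcal H}=(1+o(1))\abs{S_1\times\cdots\times S_d}\Phi,
\]
so the mean of $\abs{A'\cap(a+tQ)}/M^d$ over $(a,t)\in\mathcal H$ is $(1+o(1))\abs{A'}/\abs{S_1\times\cdots\times S_d}\ge\delta-o(1)$. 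Since $\mathcal H$ is nonempty for large $N$ (the relevant singular series being positive, as there is no local obstruction) and these densities lie in $[0,1]$, a Markov-type estimate yields some $(a_*,t_*)\in\mathcal H$ with $\abs{A'\cap(a_*+t_*Q)}\ge(\delta/2)M^d$.

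To finish, identify $Q$ with $a_*+t_*Q$; then $A'\cap(a_*+t_*Q)$ becomes a subset of $Q$ of density $\ge\delta/2$, which by the choice of $M$ contains $\{x+sv_1,\dots,x+sv_k\}$ with $s\ge1$, giving $\{(a_*+t_*x)+(t_*s)v_1,\dots,(a_*+t_*x)+(t_*s)v_k\}\subseteq A'$, and undoing the $W$-trick turns this into a set of the required form inside $A$ with positive common difference. I expect the one genuinely delicate point to be the uniformity in the base point $p$ used in the middle step: summing the Green--Tao count over the essentially arbitrary dense set $A'$ must still reproduce its relative density, which is exactly what the $W$-trick is for, the remaining task being the indicated $L^1$ bound on the singular series. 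Everything else is bookkeeping around the $W$-trick together with the two quoted theorems.
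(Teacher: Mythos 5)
Your overall strategy is the same Varnavides-type averaging over homothetic copies of a grid that the paper uses, with the $W$-trick handling small-prime biases, so the architecture is right. But there is a genuine gap at the crucial counting step.

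You claim that ``for each fixed $p$ the Green--Tao theorem on linear equations in primes evaluates the number of admissible $t$ as $(1+o(1))\mathfrak S_p\Phi$.'' This is not something the Green--Tao theorem gives. For fixed $p$, the forms you need to control are $t\mapsto p_j + s\,t$ with $j\in[d]$ and $s$ ranging over the relevant shifts, i.e.\ a system of affine forms in the \emph{single} variable $t$. Any two such forms have homogeneous parts that are scalar multiples of $t$, hence linearly dependent, which is exactly the degeneracy ruled out in the hypothesis of the Green--Tao linear forms theorem (the paper's Theorem~\ref{thm:GT} states this condition explicitly). Counting, for a fixed starting point $p$, the common differences $t$ that make a full $M$-term progression of primes is an instance of the Hardy--Littlewood prime-tuples problem and is open; Green--Tao gives you asymptotics only when you average over at least two genuinely independent parameters. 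The secondary issue you flag (the $L^1$ control of $\mathfrak S_p$ over an arbitrary dense $A'$) is downstream of this and would not save the argument, since the quantity $(1+o(1))\mathfrak S_p\Phi$ you want to sum over $p\in A'$ is not established for a single $p$ in the first place.

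The paper sidesteps exactly this obstacle. It never fixes $p$: after the change of variables $n' = n + i r$, the indicator $\tA(n')$ stays inside a sum over \emph{both} $n'$ and $r$, and Cauchy--Schwarz against the weight $\prod_j \tL_j(n'_j)$ converts the error term into a second-moment expression. Expanding the square produces a linear forms count in the $d+1$ variables $(n'_1,\dots,n'_d,r,r')$ whose forms are pairwise independent, so Theorem~\ref{thm:GT} does apply and shows the error is $o(\tN^{d+1})$. In short: the paper replaces ``evaluate the count through each fixed base point'' (not available) with ``control the mean-square deviation over base points'' (available via Green--Tao). If you reformulate your double count to keep $A'$ inside the joint sum over $(p,t)$ and insert a Cauchy--Schwarz step of this type, the proposal can be repaired along the paper's lines; as written, the middle step fails.
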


In this note we give a short alternative proof of the theorem, using
the landmark result of Green and Tao \cite{GT10} (which is conditional
on results later proved in \cite{GT12} and with Ziegler in \cite{GTZ12}) on the asymptotics for the number of primes satisfying certain systems of linear equations, as well as the following multidimensional generalization of Szemer\'edi's theorem established by Furstenberg and Katznelson \cite{FK78}.

\begin{theorem}[Multidimensional Szemer\'edi theorem \cite{FK78}] \label{thm:FK}
Let $d$ be a positive integer, $v_1, \dots, v_k \in \ZZ^d$, and $\delta > 0$. If $N$ is sufficiently large, then every subset $A$ of $[N]^d$ of cardinality $\abs{A} \geq \delta N^d$ contains a set of the form $a + tv_1, \dots, a + tv_k$, where $a \in \ZZ^d$ and $t$ is a positive integer.
\end{theorem}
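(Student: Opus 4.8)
\emph{Approach.} The plan is to derive Theorem~\ref{thm:FK} from the \emph{hypergraph removal lemma} of Gowers and of Nagle--R\"odl--Schacht--Skokan: for every $r\ge2$ and $\e>0$ there is $\gamma>0$ such that any $r$-uniform hypergraph on $n$ vertices containing at most $\gamma n^{r+1}$ copies of $K^{(r)}_{r+1}$ can be made $K^{(r)}_{r+1}$-free by deleting at most $\e n^r$ edges. This is the combinatorial rather than the original ergodic route: Furstenberg and Katznelson instead applied the correspondence principle to reduce to a multiple recurrence statement, $\liminf_N \frac1N\sum_{n\le N}\mu(A\cap T_1^{-n}A\cap\dots\cap T_k^{-n}A)>0$ for commuting measure-preserving $T_i$, and proved that via structural (IP-limit / characteristic-factor) methods; I would take the combinatorial route because it isolates essentially a single black box.

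\emph{Reduction to corners.} By standard density-preserving manipulations --- a lift of the configuration to an affinely independent one spanning its ambient space, followed by an affine change of coordinates --- it suffices to treat the ``corner'' configuration: for every $m\ge1$, any $A\subseteq[N]^m$ with $\abs A\ge\delta N^m$ contains $a,a+te_1,\dots,a+te_m$ for some $a\in\ZZ^m$ and integer $t\ge1$ (the case $m=k-1$ after normalisation). These reductions are elementary and I would only sketch them.

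\emph{The geometric encoding (Solymosi's argument).} For $m=2$ this is the corners theorem via triangle removal: form a tripartite graph whose classes are the vertical lines $\{x_1=c\}$, the horizontal lines $\{x_2=c\}$, and the anti-diagonal lines $\{x_1+x_2=c\}$ in a box of side $O(N)$, joining two lines of different classes when they meet in a point of $A$. Each $a\in A$ gives the trivial triangle of the three lines through $a$; the trivial triangles are pairwise edge-disjoint, so destroying them costs $\ge\abs A\ge\delta N^2$ deletions, while every triangle --- lines $\{x_1=p\},\{x_2=q\},\{x_1+x_2=r\}$ with all three pairwise intersections in $A$ --- is exactly a corner with $t=p+q-r$. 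As each class has $n=O(N)$ vertices, triangle removal forces $\Omega(N^3)$ triangles, only $O(N^2)$ of which are trivial, so some corner has $t\ne0$. For general $m$ I would run the same construction with an $(m+1)$-partite $m$-uniform hypergraph: classes $1,\dots,m$ are the coordinate hyperplanes $\{x_j=c\}$ and class $0$ the anti-diagonal hyperplanes $\{x_1+\dots+x_m=c\}$; a choice of one hyperplane from each of $m$ of the classes spans a hyperedge iff those $m$ hyperplanes meet in a single point of $A$. A copy of $K^{(m)}_{m+1}$ then corresponds to hyperplanes $\{x_j=c_j\}$ ($1\le j\le m$) and $\{\sum_j x_j=c_0\}$ all of whose $m+1$ induced intersection points lie in $A$, and a short computation identifies those points with $a,a+te_1,\dots,a+te_m$ where $a=(c_1,\dots,c_m)$ and $t=c_0-(c_1+\dots+c_m)$. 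The $\ge\delta N^m$ trivial copies ($t=0$, the concurrent hyperplanes through a point of $A$) are pairwise edge-disjoint, and since each class has $n=O(N)$ vertices the removal lemma forces $\Omega(N^{m+1})$ copies of $K^{(m)}_{m+1}$; only $O(N^m)$ are trivial, so some copy has $t\ne0$, a genuine corner (after reflecting the configuration if $t<0$). With the reduction, this gives Theorem~\ref{thm:FK}.

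\emph{The main obstacle.} Everything outside the removal lemma --- the reductions, the dictionary between copies of $K^{(m)}_{m+1}$ and corners, and checking that the trivial copies are edge-disjoint so that with $n=O(N)$ vertices the gap between $n^r$ and $n^{r+1}$ in the removal lemma is decisive --- is elementary bookkeeping. The real difficulty is the removal lemma itself. For $m=2$ it is the triangle removal lemma, a short consequence of Szemer\'edi's regularity lemma; but for $m\ge3$ one needs \emph{hypergraph} regularity, which is substantially more delicate: the correct notion regulates a whole hierarchy of partitions (of vertices, of pairs, of triples, \dots) simultaneously, the regularity lemma is proved by an iterated energy increment climbing this hierarchy, and --- the genuinely hard point, and the reason naive generalisations fail --- the accompanying counting lemma, that a regular dense hypergraph contains close to the expected number of copies of any fixed subhypergraph, holds only for this layered notion of regularity.
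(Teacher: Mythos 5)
The paper does not prove Theorem~\ref{thm:FK} at all: it is imported as a black box from Furstenberg and Katznelson \cite{FK78}, whose argument runs through the correspondence principle and an infinitary multiple-recurrence theorem for commuting measure-preserving transformations. Your proposal is therefore necessarily a different route, and it is the correct and by-now standard combinatorial one: the reduction to corners, Solymosi's hyperplane encoding, and the hypergraph removal lemma of Gowers and of Nagle--R\"odl--Schacht--Skokan. All the bookkeeping you describe checks out --- the dictionary between copies of $K^{(m)}_{m+1}$ and the points $a, a+te_1,\dots,a+te_m$ with $t=c_0-(c_1+\cdots+c_m)$ is right, the trivial copies are indeed pairwise edge-disjoint because any $m$ of the $m+1$ hyperplanes through a point determine that point, and the gap between $\e n^m$ and $\gamma n^{m+1}$ does the work --- and you correctly identify the removal lemma (equivalently, hypergraph regularity together with its counting lemma) as the sole non-elementary input. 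What each approach buys: the ergodic proof came first, needs no regularity machinery, and extends more readily to IP and polynomial versions, but is inherently infinitary and yields no bounds; the combinatorial proof is finitary and in principle quantitative (with Ackermann-type constants), and is the form of the theorem that relativizes --- indeed Cook--Magyar--Titichetrakun's proof of Theorem~\ref{thm:rel-sz-primes} proceeds by relativizing exactly this removal-lemma route, while the present paper's point is that one can avoid doing so by using Theorem~\ref{thm:FK} as a black box.

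One small point to tighten: the removal argument produces a corner with $t\ne 0$, and ``reflecting the configuration if $t<0$'' does not literally convert it into a positively dilated copy of $\{0,e_1,\dots,e_m\}$, since the reflected configuration $\{0,-e_1,\dots,-e_m\}$ is a different shape. The standard fix is to symmetrize before applying the argument: run the whole construction for the configuration $S\cup(c-S)$ for a suitable $c$, so that a homothetic copy with negative $t$ of the symmetrized configuration contains a homothetic copy with positive $t$ of $S$ itself. This is routine, but it should be said.
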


To prove Theorem~\ref{thm:rel-sz-primes}, we begin by fixing  $d, v_1, \dots, v_k, \delta$.
Using Theorem~\ref{thm:FK}, we can fix a large integer $m > 2d/\delta$ so that any subset of $[m]^d$ with at least $\delta m^d / 2$ elements contains a set of the form $a + tv_1, \dots, a + tv_k$, where $a \in \ZZ^d$ and $t$ is a positive integer.

We next discuss a sketch of the proof idea. The Green-Tao theorem \cite{GT08} (also see \cite{CFZrelsz,CFZgt} for some recent simplifications) states that there are arbitrarily long arithmetic progressions in the primes. It follows that for $N$ large, $\cP_N^d$ contains homothetic copies of $[m]^d$. We use a Varnavides-type argument \cite{Va} and consider a random homothetic copy of the grid $[m]^d$ inside $\cP_N^d$. In expectation, the set $A$ should occupy at least a $\delta/2$ fraction of the random homothetic copy of $[m]^d$. This follows from a linearity of expectation argument. Indeed, the Green-Tao-Ziegler result \cite{GT10,GT12,GTZ12} and a second moment argument imply that most points of $\cP_N^d$ appear in about the expected number of such copies of the grid $[m]^d$. Once we find a  homothetic copy of $[m]^d$ containing at least $\delta m^d /2$ elements of $A$, we obtain by Theorem~\ref{thm:FK} a subset of $A$ of the form $a + tv_1, \dots, a + tv_k$, as desired.

To make the above idea actually work, we first apply the $W$-trick as
described below. This is done to avoid certain biases in the primes. We also only consider homothetic
copies of $[m]^d$ with common difference $r \leq N/m^2$ in order to
guarantee that almost all elements of $\cP_N^d$ are in about the same
number of such homothetic copies of $[m]^d$.

\begin{remarks}
This argument also produces a relative multidimensional Szemer\'edi theorem,
where the complexity of the linear forms condition on the majorizing
measure depends on $d, v_1, \dots, v_k$ and $\delta$. It seems
plausible that the dependence on $\delta$ is unnecessary; this was
shown for the one-dimensional case in \cite{CFZrelsz}. Our arguments
share some features with those of Tao and Ziegler \cite{TZ13ar}, who also use the results in \cite{GT10,GT12,GTZ12}. However, the proof in \cite{TZ13ar} first establishes a relativized version of the Furstenberg correspondence principle and then proceeds in the ergodic theoretic setting, whereas we go directly to the multidimensional Szemer\'edi theorem. Cook, Magyar, and Titichetrakun
\cite{CMT13ar} take a different approach and develop a relative
hypergraph removal lemma from scratch, and they also require a linear
forms condition whose complexity depend on $\delta$.

Conditional on a certain polynomial extension of the Green-Tao-Ziegler
result (c.f.~the Bateman-Horn conjecture \cite{BT}), one can also
combine this sampling argument with the polynomial extension of
Szemer\'edi's theorem by Bergelson and Leibman \cite{BL} to obtain a
polynomial extension of Theorem~\ref{thm:rel-sz-primes}.
\end{remarks}

The hypothesis that $\abs{A} \geq \delta \abs{\cP_N}^d$ implies that
\begin{equation} \label{eq:density-L}
\sum_{n_1, \dots, n_d \in [N]} 1_A (n_1, \dots, n_d) \Lambda'(n_1) \cdots \Lambda'(n_d) \geq (\delta - o(1)) N^d,
\end{equation}
where $1_A$ is the indicator function of $A$, and $o(1)$ denotes some quantity that goes to zero as $N \to \infty$, and $\Lambda'(p) = \log p$ for prime $p$ and $\Lambda'(n) = 0$ for nonprime $n$.

Next we apply the $W$-trick \cite[\S5]{GT10}. Fix some slowly growing function $w = w(N)$; the choice $w:=\log\log\log N$ will do.  Define $W := \prod_{p \leq w} p $ to be the product of all primes at most $w$. For each $b \in [W]$ with $\gcd(b,W) = 1$, define
\[
\Lambda'_{b,W}(n) := \frac{\phi(W)}{W} \Lambda'(Wn + b)
\]
where $\phi(W) = \#\{b \in [W] : \gcd(b,W) = 1\}$ is the Euler totient function. Also define
\[
1_{A_{b_1, \dots, b_d,W}}(n_1, \dots, n_d) := 1_A(Wn_1 + b_1,\dots, Wn_d + b_d).
\]
By \eqref{eq:density-L} and the pigeonhole principle,  we can choose $b_1, \dots, b_d \in [W]$ all coprime to $W$ so that
\begin{equation}\label{eq:N/W}
\sum_{1 \leq n_1, \dots, n_d \leq N/W} 1_{A_{b_1, \dots, b_d,W}}(n_1, \dots, n_d) \Lambda'_{b_1,W}(n)\Lambda'_{b_2,W}(n)\cdots\Lambda'_{b_d,W}(n) \geq (\delta - o(1)) \paren{\frac{N}{W}}^d,
\end{equation}
We shall write
\[
\tN := \floor{N/W}, \qquad R:= \lfloor \tN/m^2 \rfloor, \qquad
\tA := 1_{A_{b_1,\dots,b_d,W}} \qquad \text{and} \qquad \tL_j := \Lambda'_{b_j,W}
\]
(all depending on $N$). So \eqref{eq:N/W} reads
\begin{equation} \label{eq:A-delta}
\sum_{n_1, \dots, n_d \in [\tN]} \tA(n_1,\dots,n_d) \tL_1(n_1)\tL_2(n_2)\cdots \tL_d(n_d) \geq (\delta - o(1)) \tN^d
\end{equation}

The Green-Tao result \cite{GT10} (along with \cite{GT12,GTZ12}) says that $\Lambda'_{b_j,W}$ acts
pseudorandomly with average value about $1$ in terms of counts of linear forms. The statement below is an easy corollary of \cite[Thm.~5.1]{GT10}.

\begin{theorem}[Pseudorandomness of the $W$-tricked primes] \label{thm:GT}
Fix a linear map $\Psi= (\psi_1, \dots,  \psi_t) : \ZZ^d \to \ZZ^t$ (in particular $\Psi(0) = 0$) where no two $\psi_i$, $\psi_j$ are linearly dependent. Let $K \subseteq[-\tN,\tN]^d$ be any convex body. Then, for any $b_1, \dots, b_t \in [W]$ all coprime to $W$, we have
\[
\sum_{n \in K \cap \ZZ^d} \prod_{j \in [t]} \Lambda'_{b_j, W}(\psi_j(n))
= \#\{n \in K \cap \ZZ^d: \psi_j(n) > 0 \  \forall j\} + o(\tN^d).
\]
where $o(\tN^d) := o(1) \tN^d$. Note that the error term does not depend on $b_1, \dots, b_t$ (although it does depend on $\Psi$).
\end{theorem}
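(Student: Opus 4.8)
The plan is to read this off from the Green--Tao theorem on linear equations in primes \cite[Thm.~5.1]{GT10} (which rests on \cite{GT12,GTZ12}) and then dispose of two harmless lower-order terms. Since $\Lambda'_{b_j,W}(\psi_j(n)) = (\phi(W)/W)\,\Lambda'\paren{W\psi_j(n)+b_j}$, the quantity in question is $(\phi(W)/W)^t$ times a sum of products of $\Lambda'$ over the affine-linear forms $\ell_j(n) := W\psi_j(n)+b_j$. These forms are pairwise linearly independent --- their linear parts are those of the $\psi_j$ scaled by $W$ --- and non-constant (we may assume each $\psi_j\not\equiv0$, the single-zero-form case $t=1$ being trivial), so this is precisely the finite-complexity situation covered by \cite[Thm.~5.1]{GT10} for the convex body $K\subseteq[-\tN,\tN]^d$. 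Feeding in the normalization $(\phi(W)/W)^t = \prod_{p\le w}((p-1)/p)^t$, which exactly cancels the local factors of that asymptotic at the primes $p\le w$ (each $\ell_j$ is $\equiv b_j\not\equiv 0$ modulo such a $p$), we obtain
\[
\sum_{n \in K \cap \ZZ^d} \prod_{j \in [t]} \Lambda'_{b_j,W}(\psi_j(n)) = \beta_\infty \prod_{p > w} \beta_p + o(\tN^d),
\]
where $\beta_p$ is the local density at $p$, $\beta_\infty$ is the archimedean density --- equal, up to $O_\Psi(\tN^{d-1})$, to $\operatorname{vol}\{x\in K:\psi_j(x)>0\ \forall j\}$, with its mild dependence on the $b_j$ swallowed by that error --- and the $o(\tN^d)$ term is uniform over all $b_1,\dots,b_t$ coprime to $W$.

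It then remains to identify $\beta_\infty\prod_{p>w}\beta_p$ with $\#\{n\in K\cap\ZZ^d : \psi_j(n)>0\ \forall j\}$ up to $o(\tN^d)$. For the archimedean factor, standard lattice-point counting shows that the convex set $K\cap\bigcap_j\{\psi_j>0\}$, whose boundary has $(d{-}1)$-dimensional measure $O_\Psi(\tN^{d-1})$, contains $\operatorname{vol}\paren{K\cap\bigcap_j\{\psi_j>0\}}+O_\Psi(\tN^{d-1})$ lattice points, and relaxing the strict inequalities changes this by a further $O_\Psi(\tN^{d-1})$; hence $\beta_\infty=\#\{n\in K\cap\ZZ^d : \psi_j(n)>0\ \forall j\}+o(\tN^d)$. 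For the Euler product, a short computation with the local von Mangoldt weights gives $\beta_p=1+O_\Psi(p^{-2})$ for every prime $p$ outside a finite set $P_0=P_0(\Psi)$: the naive $\pm t/p$ contributions cancel automatically, and the residual $O(p^{-2})$ bound is where pairwise linear independence enters, since distinct $\psi_i,\psi_j$ vanish simultaneously modulo $p$ only on a codimension-$2$ subspace once $p\notin P_0$. Since $w=w(N)\to\infty$ we have $w>\max P_0$ for $N$ large, so $\prod_{p>w}\beta_p=\prod_{p>w}\paren{1+O_\Psi(p^{-2})}=1+O_\Psi(1/w)=1+o(1)$; as $\beta_\infty\le\operatorname{vol}(K)=O(\tN^d)$, combining the two estimates yields the claim.

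The deep input is entirely \cite{GT10,GT12,GTZ12}; there is no further obstacle of substance. The only steps needing genuine (if routine) attention are checking the finite-complexity hypothesis --- which is literally the ``no two $\psi_i,\psi_j$ linearly dependent'' assumption --- and establishing $\prod_{p>w}\beta_p\to1$, which again uses pairwise independence and would fail for a system with a repeated form. One should also be sure to invoke the form of \cite[Thm.~5.1]{GT10} whose error term is uniform over the residues $b_j$ (and over the slowly growing $W$), since this underlies the uniformity asserted in the last sentence of the statement.
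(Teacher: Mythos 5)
The paper offers no argument here---Theorem~\ref{thm:GT} is simply asserted to be ``an easy corollary'' of \cite[Thm.~5.1]{GT10}---so there is no internal proof to compare against. Your write-up reaches the right conclusion and correctly isolates the two arithmetic facts that make things work (the cancellation of the $\pm t/p$ terms in $\beta_p$, so that pairwise linear independence gives $\beta_p = 1 + O_\Psi(p^{-2})$; and the $O(\tN^{d-1})$ comparison between $\beta_\infty$ and the lattice-point count). But I believe you are applying \cite[Thm.~5.1]{GT10} in a form it does not actually take, and this matters. Theorem~5.1 of \cite{GT10} is already the $W$-tricked statement: it is phrased for $\Lambda'_{b_j,W}$ applied to the \emph{original} bounded-coefficient forms $\psi_j$, with a main term that is just a volume, uniformly in the $b_j$'s and in $W$; the singular series has been absorbed by the $W$-trick. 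Read that way, Theorem~\ref{thm:GT} really is an immediate corollary---replace $\operatorname{vol}(K\cap\Psi^{-1}((\RR^+)^t))$ by $\#\{n\in K\cap\ZZ^d:\psi_j(n)>0\ \forall j\}$ with an $O(\tN^{d-1})$ loss---and your passage through $\beta_\infty\prod_p\beta_p$ is re-deriving the singular-series analysis that Green and Tao themselves carry out to pass between their Theorems~1.8 and~5.1, in the reverse direction. If instead you genuinely intend to feed the shifted forms $\ell_j = W\psi_j + b_j$ into a Theorem~1.8-type asymptotic, then you hit the very problem the $W$-trick is designed to circumvent: that theorem's error is $o_{L}(\tN^d)$ where $L$ bounds the coefficients, and here $L\ge W\to\infty$ with $N$, so the error is not automatically $o(\tN^d)$. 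So the singular-series route as written has a gap unless you explicitly invoke the uniformity-in-$W$ built into Theorem~5.1, at which point the singular-series computation is superfluous.

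A small additional point: the ``single-zero-form case $t=1$ being trivial'' is not trivial---it is false. If $t=1$ and $\psi_1\equiv 0$, the left side is $\Lambda'_{b_1,W}(0)\cdot\#(K\cap\ZZ^d)$, which can be of order $(\phi(W)/W)(\log W)\tN^d \sim (w/\log w)\,e^{-\gamma}\,\tN^d$, not $o(\tN^d)$, while the right side is $0$. The statement should be read as requiring each $\psi_j\ne 0$ (for $t\ge2$ this is forced by the pairwise-independence hypothesis, since the zero form is linearly dependent with anything), and in the paper's application all forms are visibly nonzero, so this is harmless in context---but it is not a case one may dismiss as trivial.
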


The next lemma shows that $A$ in expectation contains a
considerable fraction of a random homothetic copy of $[m]^d$ with
common difference at most $R = \lfloor N/m^2 \rfloor$ in the $W$-tricked subgrid of $\cP_N^d$.

\begin{lemma}\label{lem:A-AP}
If $\tA$ satisfies \eqref{eq:A-delta}, then
\begin{multline}\label{eq:A-AP}
\sum_{\substack{n_1, \dots, n_d \in [\tN] \\
                r \in [R]}}
     \paren{\sum_{i_1, \dots, i_d \in [m]} \tA(n_1 + i_1 r, \dots, n_d
       + i_d r)}
     \prod_{j \in [d]} \prod_{i \in [m]} \tL_j(n_j + i r) \\
\geq
(\delta m^d - dm^{d-1} - o(1))R\tN^{d}.
\end{multline}
\end{lemma}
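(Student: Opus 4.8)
The strategy is a direct Varnavides-type double-counting argument built on linearity of expectation, with Theorem~\ref{thm:GT} supplying the key pseudorandomness input. The left-hand side of \eqref{eq:A-AP} counts, with weights, pairs consisting of a homothetic copy of $[m]^d$ (with base point $(n_1,\dots,n_d)$ and common difference $r$) together with a point of that copy lying in $A$. I would rewrite the sum by exchanging the order of summation, pulling the inner sum over $i_1, \dots, i_d \in [m]$ to the outside, so that for each fixed offset vector $(i_1,\dots,i_d)$ we are left to estimate
\[
S_{i_1,\dots,i_d} := \sum_{\substack{n_1,\dots,n_d \in [\tN] \\ r \in [R]}} \tA(n_1 + i_1 r, \dots, n_d + i_d r) \prod_{j \in [d]} \prod_{i \in [m]} \tL_j(n_j + i r),
\]
and the left-hand side of \eqref{eq:A-AP} is $\sum_{i_1,\dots,i_d \in [m]} S_{i_1,\dots,i_d}$.

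To handle a single $S_{i_1,\dots,i_d}$, I would first discard the weight on all factors except the one attached to the point $(n_1 + i_1 r, \dots, n_d + i_d r)$ itself, exploiting nonnegativity of $\tL_j$ and the fact that each $\tL_j$ has average value $\approx 1$. More precisely, I would change variables to make $(n_1 + i_1 r, \dots, n_d + i_d r)$ the ``anchor'' and then apply Theorem~\ref{thm:GT} to the linear system given by the $dm$ forms $(n_j + ir)_{j \in [d], i \in [m]}$ on the variables $(n_1,\dots,n_d,r)$; one checks that no two of these forms are proportional (here $m \geq 2$ and the $i$'s are distinct, so within a fixed $j$ the forms $n_j + ir$ are pairwise non-proportional, and forms with different $j$ involve different $n_j$ variables). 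Separating out the anchor factor and summing it against $\tA$, comparison with \eqref{eq:A-delta} shows that $S_{i_1,\dots,i_d} \geq (\delta - o(1)) R \tN^d$ \emph{provided} the base point and difference are such that all $m$ translates $n_j + ir$ stay inside $[\tN]$ for every $j$. The boundary defect — the set of $(n_1,\dots,n_d,r)$ with $r \leq R$ for which some translate escapes $[\tN]$ — has size at most $d m R \tN^{d-1}$ (for each coordinate $j$, at most $mR$ choices of $n_j$ near the top end, times $\tN^{d-1}$ for the other coordinates, times $R$ for $r$), so it contributes an error of at most $dm \cdot mR\tN^{d-1} \cdot (\text{bounded weight})$. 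Summing $S_{i_1,\dots,i_d} \geq (\delta - o(1))R\tN^d$ over all $m^d$ choices of $(i_1,\dots,i_d)$ and carefully tracking that the boundary losses total $O(dm^{d-1} R \tN^d)$ gives the claimed bound $(\delta m^d - dm^{d-1} - o(1))R\tN^d$.

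The main obstacle is bookkeeping the two competing error sources so that the constant in front of $m^{d-1}$ comes out to exactly $d$ (rather than something larger): the $o(1)$ errors from $m^d$ separate applications of Theorem~\ref{thm:GT} are harmless since $m$ is fixed, but the boundary-effect loss is genuinely of order $m^{d-1}$ and must be estimated tightly. The cleanest route is probably to not peel off offsets one at a time but instead, for each coordinate $j$ separately, to absorb the ``$n_j$ near the boundary'' cases into a single clean estimate: write the full left-hand side as the count over \emph{all} $(n_1,\dots,n_d,r) \in [\tN]^d \times [R]$ (ignoring whether translates escape, which is legitimate since $\tA$ and $\tL_j$ vanish outside their natural ranges once we interpret them as $0$ there), apply Theorem~\ref{thm:GT} once to get the main term $\approx \sum_{i} \sum_{(n,r)} \tA(\text{anchor})$ over valid configurations, and then note that requiring the full grid to fit costs at most $dm^{d-1}(1+o(1))$ relative to requiring just the anchor to lie in $[\tN]^d$. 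I expect roughly $\delta m^d$ from the diagonal sum and the $-dm^{d-1}$ correction to emerge from exactly this boundary comparison.
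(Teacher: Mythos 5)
Your outer structure matches the paper's: you split the left-hand side into a sum over offsets $(i_1,\dots,i_d) \in [m]^d$, change variables to make $(n_1+i_1 r,\dots,n_d+i_d r)$ the anchor, and account for the $-dm^{d-1}$ loss via the boundary (each offset contributes a loss of order $d/m$, and there are $m^d$ offsets, so the accounting is consistent). But there is a genuine gap at the heart of the per-offset estimate. You write that you would ``discard the weight on all factors except the one attached to the anchor, exploiting nonnegativity of $\tL_j$ and the fact that each $\tL_j$ has average value $\approx 1$,'' and that this plus one application of Theorem~\ref{thm:GT} to the $dm$ forms gives $S_{i_1,\dots,i_d} \geq (\delta-o(1))R\tN^d$. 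This step is not justified as stated. The functions $\tL_j$ are nowhere near $1$ pointwise (they take the value $0$ on a density-one set and a value of order $\log N$ on the rest), so you cannot replace the non-anchor factors $\prod_{j}\prod_{i\neq i_j}\tL_j(n'_j+(i-i_j)r)$ by $1$ and get a valid lower bound --- replacing a nonnegative, highly oscillating weight by its average does not preserve inequalities. Moreover, Theorem~\ref{thm:GT} only controls pure products of $\tL$'s, not sums weighted by the unknown indicator $\tA$, so it cannot be ``applied to the linear system'' directly when $\tA$ is present.

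What is needed, and what the paper actually does, is a second-moment argument: after the change of variables and restriction to $(\tN/m,\tN]^d$, one compares the sum with $R$ times the truncated version of \eqref{eq:A-delta}, bounds the difference by Cauchy--Schwarz using $0\le\tA\le 1$ (putting one factor of $\prod_j\tL_j(n'_j)$ on each side), and then expands the square $\bigl(\sum_{r\in[R]}\bigl(\prod_{j,i\neq i_j}\tL_j(n'_j+(i-i_j)r)-1\bigr)\bigr)^2$ into three sums $T_1,T_2,T_3$, each of which is a pure product of $\tL$'s over a linear system in $(n'_1,\dots,n'_d,r,r')$ to which Theorem~\ref{thm:GT} does apply. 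Showing $T_1,T_2,T_3$ agree up to $o(\tN^{d+2})$ is the technical content. Your sketch skips this entirely; without it the claim $S_{i_1,\dots,i_d}\ge(\delta-o(1))R\tN^d$ does not follow. The alternative route you propose at the end (apply Theorem~\ref{thm:GT} once over all of $[\tN]^d\times[R]$ and then do a boundary comparison) has the same problem: it still leaves a sum of $\tA$ against a product of $\tL$'s, and no version of Theorem~\ref{thm:GT} handles that on its own.
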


\begin{proof}[Proof of Theorem~\ref{thm:rel-sz-primes} (assuming Lemma \ref{lem:A-AP}) ]
By Theorem~\ref{thm:GT} we have
\[
\sum_{\substack{n_1, \dots, n_d \in [\tN] \\
                r \in [R]}}
\prod_{j \in [d]} \prod_{i \in [m]} \tL_j(n_j + i r) =
(1 + o(1)) R\tN^{d},
\]
So by \eqref{eq:A-AP}, for sufficiently large $N$, there exists some choice of $n_1, \dots, n_d \in [\tN]$ and $r \in [R]$ so that
\[
\sum_{i_1, \dots, i_d \in [m]} \tA(n_1 + i_1 r, \dots, n_d + i_d r) \geq
\frac12 \delta m^d.
\]
This means that a certain dilation of the grid $[m]^d$ contains at
least $\delta m^d/2$ elements of $A$, from which it follows by the
choice of $m$ that it must contain a set of the form $a + t v_1,
\dots, a + t v_k$.
\end{proof}

Lemma~\ref{lem:A-AP} follows from the next lemma by summing over all choices of  $i_1, \dots, i_d \in [m]$.

\begin{lemma} \label{lem:A-AP-j}
Suppose $\tA$ satisfies \eqref{eq:A-delta}. Fix $i_1, \dots, i_d \in [m]$. Then we have
\begin{equation}\label{eq:A-AP-j}
\sum_{\substack{n_1, \dots, n_d \in [\tN] \\
                r \in [R]}}\tA(n_1 + i_1 r, \dots, n_d + i_d r)
     \prod_{j \in [d]} \prod_{i \in [m]} \tL_j(n_j + i r)
\geq
\paren{\delta  - \frac{d}{m} - o(1)} R\tN^{d}.
\end{equation}
\end{lemma}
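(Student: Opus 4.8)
Write $S$ for the left-hand side of \eqref{eq:A-AP-j}. The plan is to substitute so that $\tA$ is evaluated at a single free point, factor off the $\tL$-weight at that point, sum over $r$, and argue that what is left concentrates near $R$ once a thin boundary layer is discarded — the layer's measure being what produces the $d/m$. Concretely, set $q_j := n_j + i_j r$ in each coordinate; since $n_j + i r = q_j + (i - i_j)r$ and the $i = i_j$ factor of the product over $i$ equals $\tL_j(q_j)$, this gives
\[
S \;=\; \sum_{q \in \ZZ^d} \tA(q) \prod_{j \in [d]} \tL_j(q_j)\, \sigma(q), \qquad \sigma(q) \;:=\; \sum_{\substack{r \in [R] \\ q_j - i_j r \in [\tN]\ \forall j}} \prod_{j \in [d]} \prod_{i \in [m] \setminus \{i_j\}} \tL_j\bigl(q_j + (i - i_j)r\bigr).
\]
All summands are nonnegative, so I would restrict $q$ to the bulk box $B := \{q \in \ZZ^d : mR < q_j \leq \tN\ \forall j\}$. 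For $q \in B$ and any $r \in [R]$ one checks $1 \leq q_j - i_j r \leq \tN$ (using $i_j r \leq mR$) and $q_j + (i - i_j)r \geq q_j - (m-1)R > R > 0$, so on $B$ the constraint in $\sigma$ is vacuous and $\sigma(q) = \Phi(q) := \sum_{r \in [R]} \prod_j \prod_{i \neq i_j} \tL_j(q_j + (i - i_j)r)$. Writing $\Phi = R + (\Phi - R)$,
\[
S \;\geq\; \sum_{q \in B} \tA(q) \prod_j \tL_j(q_j)\,\Phi(q) \;=\; R \sum_{q \in B} \tA(q) \prod_j \tL_j(q_j) \;+\; \sum_{q \in B} \tA(q) \prod_j \tL_j(q_j)\bigl(\Phi(q) - R\bigr).
\]

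The main work is to show the last sum is $o(R\tN^d)$. Using $\tA \leq 1$ and Cauchy--Schwarz \emph{with $\prod_j \tL_j(q_j)$ as the weight} (so $\tL_j$ is never squared), its absolute value is at most $\bigl(\sum_{q \in B} \prod_j \tL_j(q_j)\bigr)^{1/2}\bigl(\sum_{q \in B} \prod_j \tL_j(q_j)(\Phi(q) - R)^2\bigr)^{1/2}$. Expanding the square and absorbing $\prod_j \tL_j(q_j)$ into the products (using once more $\tL_j(q_j) = \tL_j(q_j + (i_j - i_j)r)$), the second factor becomes a combination of three counts of systems of linear forms in the variables $q_1, \dots, q_d, r, r'$; the forms occurring for a given $j$ are $q_j$, the $q_j + (i - i_j)r$ with $i \neq i_j$, and the $q_j + (i - i_j)r'$ with $i \neq i_j$, and it is precisely the removal of the $i = i_j$ terms that makes them pairwise linearly independent. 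Applying Theorem~\ref{thm:GT} on the convex bodies $B$, $B \times [R]$, and $B \times [R]^2$ — on all of which each form is strictly positive — these sums equal $\abs{B} + o(\tN^d)$, $\abs{B}R + o(\tN^{d+1})$, and $\abs{B}R^2 + o(\tN^{d+2})$, so that
\[
\sum_{q \in B} \prod_j \tL_j(q_j)\bigl(\Phi(q) - R\bigr)^2 \;=\; \abs{B}R^2 - 2R\,\abs{B}R + R^2\,\abs{B} + o(R^2\tN^d) \;=\; o(R^2\tN^d),
\]
the leading terms cancelling exactly. Since also $\sum_{q \in B} \prod_j \tL_j(q_j) \leq (1 + o(1))\tN^d$, the error sum is indeed $o(R\tN^d)$.

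For the main term I would bound $\sum_{q \in B} \tA(q) \prod_j \tL_j(q_j) \geq \sum_{n \in [\tN]^d} \tA(n) \prod_j \tL_j(n_j) - \sum_{q \in [\tN]^d \setminus B} \prod_j \tL_j(q_j)$; the first term is $\geq (\delta - o(1))\tN^d$ by \eqref{eq:A-delta}, and the second, by a union bound over a coordinate $j$ with $q_j \leq mR$ and the $t = 1$ case of Theorem~\ref{thm:GT}, is at most $d\,(mR + o(\tN))(\tN + o(\tN))^{d-1} \leq (d/m + o(1))\tN^d$ because $mR \leq \tN/m$. Hence $S \geq R(\delta - d/m - o(1))\tN^d - o(R\tN^d)$, which is \eqref{eq:A-AP-j}.

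The one genuinely delicate point is the restriction to the bulk $B$: over all of $[\tN]^d$ the analogous second moment would be $\Theta(d/m)\,R^2\tN^d$ instead of $o(R^2\tN^d)$ — the positivity constraints in Theorem~\ref{thm:GT} become active near the boundary and destroy the exact cancellation — and Cauchy--Schwarz would then only yield the useless $\delta - O(\sqrt{d/m})$. Passing to the bulk restores the cancellation at the honest cost of $d/m$, while the unboundedness of the weights $\tL_j$ is absorbed throughout by treating $\prod_j\tL_j(q_j)$ as a measure rather than squaring it, and by keeping every $\tL_j$-factor at a distinct linear form.
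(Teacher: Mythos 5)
Your argument is correct and follows the same route as the paper: change variables $q_j = n_j + i_j r$, drop to a bulk box where the shift constraints are vacuous (the paper uses $\tN/m < n'_j \leq \tN$, you use $mR < q_j \leq \tN$, which works for the same reason), bound the main term by subtracting boundary slabs via a union bound and Theorem~\ref{thm:GT}, and control the fluctuation of $\Phi$ about $R$ by a Cauchy--Schwarz with $\prod_j\tL_j(q_j)$ as the weight followed by a second-moment computation in which the three leading terms cancel. The only nit is the phrase ``$t=1$ case of Theorem~\ref{thm:GT}'' for the boundary-slab estimate, where one really applies it with the $d$ coordinate forms $q_1,\dots,q_d$; this is a slip of notation, not of substance.
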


\begin{proof}
By a change of variables $n'_j = n_j + i_j r$ for each $j$, we write the LHS of \eqref{eq:A-AP-j} as
\begin{equation} \label{A-AP-j-preshift}
\sum_{r \in [R]}
\sum_{\substack{n'_1, \dots, n'_d \in \ZZ \\
                n'_j - i_j r \in [\tN] \ \forall j}}
 \tA(n'_1, \dots, n'_d) \prod_{j \in [d]} \prod_{i \in [m]} \tL_j(n'_j + (i-i_j) r).
\end{equation}
Recall that $R = \lfloor \tN / m^2 \rfloor$. Note that \eqref{A-AP-j-preshift} is at least
\begin{equation} \label{A-AP-j-postshift}
\sum_{r \in [R]} \
\sum_{\tN/m < n'_1, \dots, n'_d \leq \tN}
 \tA(n'_1, \dots, n'_d) \prod_{j \in [d]} \prod_{i \in [m]} \tL_j(n'_j + (i-i_j) r).
\end{equation}
By \eqref{eq:A-delta} and Theorem~\ref{thm:GT} we have
\begin{equation} \label{eq:A-delta-trunc}
\sum_{\tN/m < n_1, \dots, n_d \leq \tN} \tA(n_1,\dots,n_d) \tL_1(n_1)\tL_2(n_2)\cdots \tL_d(n_d) \geq \paren{\delta - \frac{d}{m} -  o(1)} \tN^d
\end{equation}
(the difference between the left-hand side sums of \eqref{eq:A-delta}
and \eqref{eq:A-delta-trunc} consists of terms with $(n_1, \dots,
n_d)$ in some box of the form $[\tN]^{j-1} \x [\tN/m] \x [\tN]^{d-j}$,
which can be upper bounded by using $\tA \leq 1$, applying
Theorem~\ref{thm:GT}, and then taking the union bound
over $j \in [d]$). It remains to show that
\[
\eqref{A-AP-j-postshift} - R\cdot (\text{LHS of (\ref{eq:A-delta-trunc})}) = o(\tN^{d+1}).
\]
We have
\begin{align*} \label{eq:A-AP-j-comp}
&\eqref{A-AP-j-postshift} - R\cdot (\text{LHS of (\ref{eq:A-delta-trunc})})
\\
&=
\sum_{\substack{\tN/m < n'_1, \dots, n'_d \leq \tN \\ r \in [R]}}
 \tA(n'_1, \dots, n'_d) \paren{ \prod_{j \in [d]} \prod_{i \in [m]} \tL_j(n'_j + (i-i_j) r) - \prod_{j \in [d]} \tL_j(n'_j)}
\\
&=
\sum_{\tN/m < n'_1, \dots, n'_d \leq \tN}
 \tA(n'_1, \dots, n'_d) \paren{\prod_{j \in [d]} \tL_j(n'_j)} \paren{\sum_{r \in [R]} \paren{\prod_{j \in [d]} \prod_{i \in [m] \setminus \{i_j\}} \tL_j(n'_j + (i-i_j) r) \ - 1}}.
\end{align*}
By the Cauchy-Schwarz inequality and $0 \leq \tA \leq 1$, the above
expression can be bounded in absolute value by $\sqrt{ST}$, where
\begin{align*}
S &= \sum_{\tN/m < n'_1, \dots, n'_d \leq \tN } \prod_{j \in [d]} \tL_j(n'_j),
\\
T &= \sum_{\tN/m < n'_1, \dots, n'_d \leq \tN }
 \paren{\prod_{j \in [d]} \tL_j(n'_j)}\paren{\sum_{r \in
     [R]} \paren{\prod_{j \in [d]} \prod_{i \in [m] \setminus
       \{i_j\}} \tL_j(n'_j + (i-i_j) r) \ - 1}}^2 \\
 &= T_1 - 2T_2 + T_3,
\end{align*}
and
\begin{align*}
  T_1 &= \sum_{\substack{\tN/m < n'_1, \dots, n'_d \leq \tN \\ r, r' \in [R]}}
 \prod_{j \in [d]} \tL_j(n'_j) \prod_{i \in [m] \setminus
       \{i_j\}} \tL_j(n'_j + (i-i_j) r) \tL_j(n'_j + (i-i_j) r'),
     \\
       T_2 &= \sum_{\substack{\tN/m < n'_1, \dots, n'_d \leq \tN \\ r, r' \in [R]}}
 \prod_{j \in [d]} \tL_j(n'_j) \prod_{i \in [m] \setminus
       \{i_j\}} \tL_j(n'_j + (i-i_j) r) ,
\\
       T_3 &= \sum_{\substack{\tN/m < n'_1, \dots, n'_d \leq \tN \\ r, r' \in [R]}}
 \prod_{j \in [d]} \tL_j(n'_j).
\end{align*}
By Theorem~\ref{thm:GT} we have $S = O(\tN^{d})$, and $T_1, T_2,
T_3$ pairwise differ by $o(\tN^{d+2})$, so that $T =
o(\tN^{d+2})$. Thus $\sqrt{ST} = o(\tN^{d+1})$, as desired.
\end{proof}

\vspace{0.1cm}
\noindent {\bf Acknowledgment.} \, We thank David Conlon, Ben Green,
and Terence Tao, and the anonymous referee for helpful comments.


\end{document}